\documentclass[11pt,reqno]{amsart}
\bibliographystyle{spbasic}
\usepackage[utf8]{inputenc}
\usepackage{amscd,amssymb,amsmath,amsthm}
\usepackage[arrow,matrix]{xy}
\usepackage{graphicx}
\usepackage{epstopdf}
\usepackage{color}
\usepackage{tikz}
\usetikzlibrary {positioning}

\topmargin=0.1in \textwidth6.1in \textheight8.0in

\newtheorem{thm}{Theorem}
\newtheorem{defn}{Definition}
\newtheorem{lemma}{Lemma}
\newtheorem{pro}{Proposition}
\newtheorem{rk}{Remark}

\makeatletter
\@namedef{subjclassname@2020}{%
  \textup{2020} Mathematics Subject Classification}
\makeatother

\numberwithin{equation}{section} \setcounter{tocdepth}{1}



\begin{document}
\title [Description of  trajectories of an evolution operator generated by mosquito population]
{Description of  trajectories of an evolution operator generated by mosquito population}

\author{Z.S. Boxonov}

\address{V.I.Romanovskiy Institute of Mathematics of Uzbek Academy of Sciences}

\begin{abstract} In this paper, we study discrete-time dynamical systems generated by evolution operator of mosquito population. An invariant set is found and a Lyapunov function with respect to the operator is constructed in this set. Using the Lyapunov function, the global attraction of a fixed point is proved. Moreover, we give some biological interpretations of our results.

\end{abstract}

\subjclass[2020] {92D25}

\keywords {Lyapunov function, fixed point, limit point, invariant set.} \maketitle

\section{\bf Introduction}

Mosquito control manages the population of mosquitoes to reduce their damage to human health and economies. Mosquito control is a vital public-health practice throughout the world and especially in the tropics because mosquitoes spread many diseases, such as malaria and the Zika virus.

Mathematical models have been formulated in the literature \cite{J}, \cite{J.Li}, \cite{RV} to study the interactive dynamics and control of the wild and sterile mosquito populations. Many concrete models of mathematical biology described by corresponding non-linear evolution operator (see \cite{D}, \cite{Rpd}). Since there is no any general theory of non-linear operators, for each concrete such operator one has to use a specific method of investigation.

In this paper, we consider a wild mosquito population. For the simplified stage-structured mosquito population, we group the three aquatic stages into the larvae class by $x$, and divide the mosquito population into the larvae class and the adults, denoted by $y$. We assume that the density dependence exists only in the larvae stage \cite{J.Li}.

We let the birth rate, that is, the oviposition rate of adults be $\beta$; the rate of emergence from larvae to adults be a function of the larvae with the form of $\alpha(1-k(x))$,
where $\alpha>0$ is the maximum emergence rete, $0\leq k(x)\leq 1$, with $k(0)=0, k'(x)>0$, and $\lim\limits_{x\rightarrow \infty}k(x)=1$, is the functional response due to the intraspecific competition \cite{J}. We further assume a functional response for $k(x)$, as in \cite{J}, in the form $$k(x)=\frac{x}{1+x}.$$ We let the death rate of larvae be a linear function, denoted by $d_{0}+d_{1}x$, and
the death rate of adults be constant, denoted by $\mu$. The interactive dynamics for the wild mosquitoes are governed by the operator $W:\mathbb{R}^{2}\rightarrow \mathbb{R}^{2}$ defined as
\begin{equation}\label{systema}
W: \left\{%
\begin{array}{ll}
    x'=\beta y-\frac{\alpha x}{1+x}-(d_{0}+d_{1}x)x+x,\\[3mm]
    y'=\frac{\alpha x}{1+x}-\mu y+y
\end{array}%
\right.\end{equation}
where $\alpha >0, \beta >0, \mu >0,\ d_{0}\geq0,\ d_{1}\geq0.$ 

 If for each possible state $\textbf{z}=(x,y)\in\mathbb{R}^{2}$ describing the current generation, the state $\textbf{z}'=(x',y')\in\mathbb{R}^{2}$ is uniquely defined as $\textbf{z}'=W(\textbf{z})$ the map $W:\mathbb{R}^{2}\rightarrow \mathbb{R}^{2}$ called the evolution operator \cite{L}, \cite{Rpd}.

The main problem for a given operator $W$ and arbitrarily initial point $\textbf{z}^{(0)}=(x^{(0)},y^{(0)})\in\mathbb{R}^{2}$  is to describe the limit points of the trajectory $\{\textbf{z}^{(m)}\}_{m=0}^{\infty}$, where $\textbf{z}^{(m)}=W^m(\textbf{z}^{(0)}).$

The dynamics of the operator (\ref{systema}) were studied in detail under conditions $\beta=\mu,\ d_{0}=d_{1}=0$  in \cite{BR1} and under conditions $\beta\neq\mu,\ d_{0}=d_{1}=0$ in \cite{BR2}.
In \cite{B} under condition $\beta<\mu\left(1+\frac{d_{0}}{\alpha}\right)$ we showed that any trajectory converges to unique fixed point. Thus the remaining case is $\beta\geq\mu\left(1+\frac{d_{0}}{\alpha}\right)$. Which was not studied yet. In this paper, we study the case $\beta\geq\mu\left(1+\frac{d_{0}}{\alpha}\right)$ and give an analysis of limit points of trajectory generated by the operator $W$ given by (\ref{systema}).

\section{\bf Preliminaries and Known Results}

Consider systems of difference equations of the form
\begin{equation}\label{xy}
\left\{%
\begin{array}{ll}
    x_{n+1}=f(x_n, y_n),\\[2mm]
    y_{n+1}=g(x_n, y_n),
\end{array} n=0,1,2,... %
\right.\end{equation}
where $f$ and $g$ are given functions and the initial condition $(x_0, y_0)$ comes from some considered set in the intersection of the domains of $f$ and $g$.

Let $\mathfrak{R}$ be a subset of $\mathbb{R}^2$ with nonempty interior, and let $W:\mathfrak{R}\rightarrow\mathfrak{R}$ be a continuous operator. When the function $f(x, y)$ is increasing in $x$ and decreasing in $y$ and the function $g(x, y)$ is decreasing in $x$ and increasing in $y$, the system (\ref{xy}) is called competitive. When the function $f(x, y)$ is increasing in $x$ and increasing in $y$ and the function $g(x, y)$ is increasing in $x$ and increasing in $y$, the system (\ref{xy}) is called cooperative. An operator $W$ that corresponds to the system (\ref{xy}) is defined as $W(x, y)=\left(f(x, y), g(x, y)\right)$. Competitive and cooperative operators, which are called monotone operators, are defined similarly. Strongly cooperative systems of difference equations or operators are those for which the functions $f$ and $g$ are coordinate-wise strictly monotone \cite{BBK}.

Let $\mathbb{R}_{+}^{2}=\{(x,y): x,y\in\mathbb{R}, x\geq0, y\geq0\}.$

Note that the operator $W$ given by (\ref{systema}) well defined on $\mathbb{R}^2_{+}\setminus \{(x, y): x=-1\}$. But to define a discrete-time dynamical system of continuous operator as population we assume $x\geq0$ and $y\geq0$. Therefore, we choose parameters of the operator $W$ to
guarantee that it maps $\mathbb{R}^2_{+}$ to itself.
\begin{lemma}(see \cite{BR1}) If
\begin{equation}\label{parametr}
\alpha>0,\ \beta>0,\ 0<\mu\leq1,\ d_{0}>0,\ \alpha+d_{0}\leq1,\ d_{1}=0
\end{equation}
then the operator (\ref{systema}) maps the set $\mathbb{R}_{+}^{2}$ to itself.
\end{lemma}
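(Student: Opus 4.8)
The plan is to verify directly that both components of $W(x,y)$ are nonnegative whenever $x\geq 0$ and $y\geq 0$, using only the sign conditions collected in (\ref{parametr}). Fix $(x,y)\in\mathbb{R}_{+}^{2}$ and substitute $d_{1}=0$ into (\ref{systema}), so that the image point $(x',y')=W(x,y)$ is given by
\begin{equation*}
x'=\beta y-\frac{\alpha x}{1+x}+(1-d_{0})x,\qquad y'=\frac{\alpha x}{1+x}+(1-\mu)y.
\end{equation*}

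First I would dispose of the second coordinate: since $x\geq 0$ we have $\frac{\alpha x}{1+x}\geq 0$, and since $0<\mu\leq 1$ the coefficient $1-\mu$ is nonnegative, hence $(1-\mu)y\geq 0$; thus $y'\geq 0$ as a sum of two nonnegative terms (the remaining hypotheses are not needed here). For the first coordinate I would rewrite $x'=\beta y+x\bigl(1-d_{0}-\frac{\alpha}{1+x}\bigr)$ and note that $\beta y\geq 0$, while for the bracket one uses $\frac{\alpha}{1+x}\leq\alpha$ (valid because $1+x\geq 1$ for $x\geq 0$) together with $\alpha+d_{0}\leq 1$ to obtain $1-d_{0}-\frac{\alpha}{1+x}\geq 1-d_{0}-\alpha\geq 0$. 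Multiplying this by $x\geq 0$ preserves nonnegativity, so $x'\geq\beta y\geq 0$.

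Combining the two estimates gives $W(\mathbb{R}_{+}^{2})\subseteq\mathbb{R}_{+}^{2}$; one should also remark that $x\geq 0$ automatically avoids the pole at $x=-1$, so $W$ is genuinely defined at every point of $\mathbb{R}_{+}^{2}$ and the discrete dynamical system is well posed. There is essentially no obstacle in the argument: the only step requiring a moment's attention is the elementary bound $\frac{\alpha x}{1+x}\leq\alpha x$ for $x\geq 0$, which is precisely what lets the condition $\alpha+d_{0}\leq 1$ absorb the emergence term into the surviving-larvae term $(1-d_{0})x$ and thereby keep $x'$ nonnegative.
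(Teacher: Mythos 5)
Your proof is correct: the direct verification that $y'\geq 0$ from $0<\mu\leq 1$, $x\geq 0$, and that $x'=\beta y+x\bigl(1-d_{0}-\frac{\alpha}{1+x}\bigr)\geq 0$ from $\frac{\alpha}{1+x}\leq\alpha$ and $\alpha+d_{0}\leq 1$, is exactly the standard argument for this invariance statement, which the paper itself only cites from \cite{BR1} rather than reproving. Nothing is missing, and your remark that $x\geq 0$ keeps the map away from the singularity at $x=-1$ is a sensible addition.
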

In this case the system (\ref{systema}) becomes
\begin{equation}\label{syst}
W_{0}:\left\{%
\begin{array}{ll}
    x'=\beta y-\left(\frac{\alpha }{1+x}+d_{0}-1\right)x,\\[3mm]
    y'=\frac{\alpha x}{1+x}+(1-\mu)y.
\end{array}%
\right.\end{equation}

A point $\textbf{z}\in\mathbb{R}_{+}^{2}$ is called a fixed point of $W_{0}$ if $W_{0}(\textbf{z})=\textbf{z}$.
\begin{defn}\label{d1}
(see\cite{D}) A fixed point $\textbf{z}$ of the operator $W_0$ is called
hyperbolic if its Jacobian $J$ at $\textbf{z}$ has no eigenvalues on the
unit circle.
\end{defn}
\begin{defn}\label{d2}
(see\cite{D}) A hyperbolic fixed point $\textbf{z}$ called:
\begin{itemize}
  \item [(i)] attracting if all the eigenvalues of the Jacobi matrix $J(\textbf{z})$ are less than 1 in absolute value;
  \item [(ii)] repelling if all the eigenvalues of the Jacobi matrix $J(\textbf{z})$ are greater than 1 in absolute value;
  \item [(iii)] a saddle otherwise.
\end{itemize}
\end{defn}

The following proposition is known about the fixed points of the operator $W_0$ and their type.
\begin{pro}\label{type}(\cite{B}) The type of the fixed points for (\ref{syst}) are as follows:
\begin{itemize}
  \item[i)] if $\beta\leq\mu\left(1+\frac{d_{0}}{\alpha}\right)$,  then the operator (\ref{syst}) has unique fixed point $(0,0)$, the point
  $$(0,0)=\left\{\begin{array}{lll}
  attracting, \ \ \mbox{if} \ \ \beta<\mu\left(1+\frac{d_{0}}{\alpha}\right) \\[2mm]
  non-hyperbolic, \ \ \mbox{if} \ \  \beta=\mu\left(1+\frac{d_{0}}{\alpha}\right).
  \end{array}\right.$$
\item[ii)]if \ $\beta>\mu\left(1+\frac{d_{0}}{\alpha}\right)$, then the operator has two fixed points $(0,0)$, $(x^*,y^*)$, and the point $(x^*,y^*)$ is attracting,  the point
  $$(0,0)=\left\{\begin{array}{lll}
  repelling, \ \ \mbox{if} \ \ \beta>\mu\left(1+\frac{d_{0}}{\alpha}\right)+\alpha^* \\[2mm]
  saddle, \ \ \mbox{if} \ \ \mu\left(1+\frac{d_{0}}{\alpha}\right)<\beta<\mu\left(1+\frac{d_{0}}{\alpha}\right)+\alpha^* \\[2mm]
  non-hyperbolic, \ \ \mbox{if} \ \  \beta=\mu\left(1+\frac{d_{0}}{\alpha}\right)+\alpha^*.
  \end{array}\right.$$
  \end{itemize}
where $$\alpha^*=\frac{1}{\alpha}\left(4-2(\alpha+\mu+d_{0})\right),$$
\begin{equation}\label{x*y*}x^*=\frac{\alpha(\beta-\mu)}{\mu d_{0}}-1,\ y^*=\frac{\alpha(\beta-\mu)-\mu d_{0}}{\mu(\beta-\mu)}.\end{equation}
\end{pro}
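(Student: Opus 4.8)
The plan is to locate the fixed points by direct substitution, then classify each one from the position of the eigenvalues of the Jacobian relative to the unit circle. Putting $x'=x,\ y'=y$ in (\ref{syst}), the second equation gives $\mu y=\frac{\alpha x}{1+x}$; substituting this into the first yields $x\left(\frac{\alpha(\beta-\mu)}{\mu(1+x)}-d_{0}\right)=0$. Hence either $\textbf{z}=(0,0)$, or $1+x=\frac{\alpha(\beta-\mu)}{\mu d_{0}}$, and the latter gives an admissible point ($x>0$) exactly when $\frac{\alpha(\beta-\mu)}{\mu d_{0}}>1$, i.e. $\beta>\mu\left(1+\frac{d_{0}}{\alpha}\right)$; solving back for $y$ recovers (\ref{x*y*}). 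This already separates cases i) and ii).

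Next I would compute the Jacobian of (\ref{syst}). Using the identity $-\frac{\alpha}{1+x}+\frac{\alpha x}{(1+x)^{2}}=-\frac{\alpha}{(1+x)^{2}}$ one gets
\[
J(x,y)=\begin{pmatrix} 1-d_{0}-\dfrac{\alpha}{(1+x)^{2}} & \beta\\[3mm] \dfrac{\alpha}{(1+x)^{2}} & 1-\mu\end{pmatrix},
\]
so the characteristic polynomial is the monic real quadratic $P_{\textbf{z}}(\lambda)=\lambda^{2}-T\lambda+D$ with $T=2-d_{0}-\mu-\frac{\alpha}{(1+x)^{2}}$ and $D=\bigl(1-d_{0}-\frac{\alpha}{(1+x)^{2}}\bigr)(1-\mu)-\frac{\alpha\beta}{(1+x)^{2}}$. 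For the classification I will use the elementary position-of-roots analysis of an upward parabola (a repackaging of the Schur--Cohn/Jury conditions): both roots lie in the open unit disk iff $P(1)>0$, $P(-1)>0$ and $P(0)=D<1$; if $P(1)<0$ and $P(-1)<0$ then the roots are real with absolute value $>1$ (adding the two inequalities forces $D<-1$, and the two signs put one root above $1$ and the other below $-1$); if $P(1)<0$ and $P(-1)>0$ then one root is in $(-1,1)$ and the other is $>1$; and $P(1)=0$ or $P(-1)=0$ is exactly the non-hyperbolic situation.

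For $\textbf{z}=(0,0)$ we have $\frac{\alpha}{(1+x)^{2}}=\alpha$, and a direct evaluation gives $P(1)=\alpha\bigl(\mu(1+\tfrac{d_{0}}{\alpha})-\beta\bigr)$, $P(-1)=\alpha\bigl(\alpha^{*}+\mu(1+\tfrac{d_{0}}{\alpha})-\beta\bigr)$, and $D=(1-d_{0}-\alpha)(1-\mu)-\alpha\beta<1$, while $\alpha^{*}\ge0$ by (\ref{parametr}). Comparing $\beta$ with the thresholds $\mu(1+\tfrac{d_{0}}{\alpha})$ and $\mu(1+\tfrac{d_{0}}{\alpha})+\alpha^{*}$ and feeding the resulting signs of $P(\pm1)$ into the criterion above gives exactly the trichotomy attracting / saddle / repelling and the two non-hyperbolic boundary values for $(0,0)$. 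For $\textbf{z}=(x^{*},y^{*})$ (present precisely in case ii)), write $A=\frac{\alpha}{(1+x^{*})^{2}}=\frac{\mu^{2}d_{0}^{2}}{\alpha(\beta-\mu)^{2}}$ via $1+x^{*}=\frac{\alpha(\beta-\mu)}{\mu d_{0}}$; then $P(1)=\mu d_{0}-A(\beta-\mu)=\mu d_{0}\bigl(1-\tfrac{\mu d_{0}}{\alpha(\beta-\mu)}\bigr)>0$, the positivity being precisely the existence condition, and $D=(1-d_{0}-A)(1-\mu)-\beta A<1$ because $0\le1-d_{0}-A<1$ and $0\le1-\mu<1$. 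The one inequality left is $P(-1)>0$, which rearranges to $(2-d_{0})(2-\mu)>A(2+\beta-\mu)$; substituting $\gamma=\beta-\mu$ and $c=\frac{\mu d_{0}}{\alpha}$ (so $\gamma>c>0$, $\alpha c=\mu d_{0}$) this becomes $\gamma^{2}(2-d_{0})(2-\mu)>\alpha c^{2}(2+\gamma)$; at $\gamma=c$ it collapses to $2\bigl(2-(\alpha+d_{0})-\mu\bigr)\ge0$, true by (\ref{parametr}), and since the difference of the two sides has positive derivative $2\gamma(2-d_{0})(2-\mu)-\alpha c^{2}$ on $[c,\infty)$ it is strict for $\gamma>c$. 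With $P(1)>0$, $P(-1)>0$, $D<1$ the criterion gives that $(x^{*},y^{*})$ is attracting.

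The main obstacle is this last inequality $P(-1)>0$ at $(x^{*},y^{*})$: it is the only point where the parameter restrictions (\ref{parametr}) have to be combined with the existence threshold $\beta>\mu(1+\tfrac{d_{0}}{\alpha})$, and it really needs the reduction to $\gamma^{2}(2-d_{0})(2-\mu)>\alpha c^{2}(2+\gamma)$ together with a monotonicity comparison rather than a one-line estimate; one should also watch the degenerate case $\alpha+d_{0}=1,\ \mu=1$ (where $\alpha^{*}=0$ and the saddle band is empty). Everything else is routine bookkeeping with the Jacobian and the sign analysis of a quadratic.
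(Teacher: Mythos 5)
Your proposal is correct: the fixed-point computation, the Jacobian, and the Schur--Cohn/Jury-type sign analysis of $P(\pm1)$ and $D$ all check out (including the key rearrangement $P(-1)>0\Leftrightarrow(2-d_0)(2-\mu)>A(2+\beta-\mu)$ at $(x^*,y^*)$ and its verification via $\gamma>c$, with only routine omitted details such as $A<\alpha\leq 1-d_0$). The paper itself states this proposition without proof, citing \cite{B}, and your linearization-plus-unit-circle argument is the standard route such a proof takes, so there is no substantive divergence to report.
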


The following theorem  describes the trajectory of any initial point $\textbf{z}^{(0)}=(x^{(0)}, y^{(0)})$ in $\mathbb{R}^2_{+}$.

\begin{thm}\label{pr} (\cite{B}) For the operator $W_{0}$ given by (\ref{syst}) (i.e. under condition (\ref{parametr}))
and for any initial point $(x^{(0)}, y^{(0)})\in \mathbb R^2_+$ the following hold:
\begin{itemize}
\item[(i)] If $y^{(n)}>\frac{\alpha}{\mu}$ for any natural number $n$ then $\lim\limits_{n\to \infty}x^{(n)}=+\infty, \ \lim\limits_{n\to \infty}y^{(n)}=\frac{\alpha}{\mu};$
\item[(ii)] If there exists $n_{0}$ number such that  $y^{(n_{0})}\leq\frac{\alpha}{\mu}$ and $\beta<\mu\left(1+\frac{d_{0}}{\alpha}\right)$ then $\lim\limits_{n\to \infty}x^{(n)}=0, \ \lim\limits_{n\to \infty}y^{(n)}=0,$
\end{itemize}
where $(x^{(n)}, y^{(n)})=W_0^n(x^{(0)}, y^{(0)})$, with $W_{0}^n$ is $n$-th iteration of $W_{0}$.
\end{thm}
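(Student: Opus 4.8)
My plan is to rewrite the operator (\ref{syst}) as $x'=\beta y+\varphi(x)$, $y'=\psi(x)+(1-\mu)y$ with $\psi(x)=\frac{\alpha x}{1+x}$ and $\varphi(x)=(1-d_0)x-\frac{\alpha x}{1+x}$, and to work with the elementary facts that, under (\ref{parametr}), $\psi$ is increasing with $0\le\psi(x)<\alpha$ on $[0,\infty)$, while $\varphi(0)=0$ and $\varphi$ is non-decreasing on $[0,\infty)$ (indeed $\varphi'(x)=(1-d_0)-\frac{\alpha}{(1+x)^2}\ge(1-d_0)-\alpha\ge0$ by $\alpha+d_0\le1$); equivalently, $W_0$ is cooperative, as recalled in Section~2. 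Everything below rests on these two scalar monotonicities together with the basic observations that $y>\frac{\alpha}{\mu}$ forces $y'-y=\psi(x)-\mu y<\alpha-\mu y<0$, while $y\le\frac{\alpha}{\mu}$ forces $y'<\alpha+(1-\mu)\frac{\alpha}{\mu}=\frac{\alpha}{\mu}$.

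For part (i): under the hypothesis the sequence $\{y^{(n)}\}$ is, by the first of these inequalities, strictly decreasing and bounded below by $\frac{\alpha}{\mu}$, so $y^{(n)}\downarrow L$ for some $L\ge\frac{\alpha}{\mu}$. I would then show $x^{(n)}\to+\infty$ by contradiction: if $c:=\liminf_n x^{(n)}<\infty$, pick $x^{(n_k)}\to c$ and pass to the limit in $y^{(n_k+1)}=\psi(x^{(n_k)})+(1-\mu)y^{(n_k)}$ to get $L=\psi(c)+(1-\mu)L$, i.e. $\mu L=\psi(c)<\alpha$, contradicting $L\ge\frac{\alpha}{\mu}$. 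Finally, letting $n\to\infty$ in the same identity (now with $\psi(x^{(n)})\to\alpha$) gives $\mu L=\alpha$, hence $L=\frac{\alpha}{\mu}$.

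For part (ii) I would proceed in four steps. First, the hypothesis $y^{(n_0)}\le\frac{\alpha}{\mu}$ propagates by the second observation above, so $y^{(n)}\le\frac{\alpha}{\mu}$ for all $n\ge n_0$. Second, $\{x^{(n)}\}$ is bounded: for $n\ge n_0$ one has $x^{(n+1)}=\beta y^{(n)}+\varphi(x^{(n)})\le h(x^{(n)})$ where $h(x):=\frac{\alpha\beta}{\mu}+\varphi(x)$ is non-decreasing and $h(x)-x$ strictly decreases from $\frac{\alpha\beta}{\mu}>0$ to $-\infty$ (here $d_0>0$ is used), so for any $C\ge x^{(n_0)}$ with $h(C)\le C$ an induction gives $x^{(n)}\le C$ for all $n\ge n_0$. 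Third — the crux — set $M_x:=\limsup_n x^{(n)}$ and $M_y:=\limsup_n y^{(n)}$, both finite; a standard $\limsup$ argument (extract subsequences along which both coordinates converge, use that $\psi$ and $\varphi$ are non-decreasing) yields $\mu M_y\le\psi(M_x)$ and $d_0M_x+\psi(M_x)\le\beta M_y$, and eliminating $M_y$ gives $\mu d_0 M_x\le(\beta-\mu)\psi(M_x)=(\beta-\mu)\frac{\alpha M_x}{1+M_x}$; if $M_x>0$ this forces $\mu d_0(1+M_x)\le\alpha(\beta-\mu)$, which is impossible outright when $\beta\le\mu$ and otherwise says $1+M_x\le\frac{\alpha(\beta-\mu)}{\mu d_0}<1$, the last strict inequality being precisely the hypothesis $\beta<\mu(1+\frac{d_0}{\alpha})$ — a contradiction. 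Hence $M_x=0$, i.e. $x^{(n)}\to0$. Fourth, $\psi(x^{(n)})\to0$, so from $y^{(n+1)}=\psi(x^{(n)})+(1-\mu)y^{(n)}$ and $0\le1-\mu<1$ one gets $\limsup_n y^{(n)}\le(1-\mu)\limsup_n y^{(n)}$, whence $y^{(n)}\to0$.

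I expect the main obstacle to be the third step of part (ii): making the $\limsup$ passage rigorous (choosing subsequences on which both $x^{(n)}$ and $y^{(n)}$ converge, and orienting the two monotonicity inequalities correctly), and recognizing that $\beta<\mu(1+\frac{d_0}{\alpha})$ is exactly the arithmetic condition that forbids a strictly positive $\limsup$ for $x^{(n)}$. The boundedness in the second step is the only other point requiring genuine care, since it is what guarantees $M_x$ and $M_y$ are finite and hence that the third step makes sense.
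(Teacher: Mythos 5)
Your argument is correct as far as I can check it, but note that this paper does not actually prove Theorem \ref{pr}: it is quoted from the earlier work \cite{B}, so there is no in-paper proof to compare against. Judged on its own, your proof is complete and uses exactly the kind of elementary tools the rest of the paper relies on (coordinatewise monotonicity, invariant bounds, limits of monotone quantities). The two pillars hold under condition (\ref{parametr}): $\psi(x)=\frac{\alpha x}{1+x}$ is increasing with $\psi<\alpha$, and $\varphi'(x)=(1-d_0)-\frac{\alpha}{(1+x)^2}\ge 1-d_0-\alpha\ge 0$, so $\varphi$ is non-decreasing on $[0,\infty)$; this is what makes both the propagation of $y^{(n)}\le\frac{\alpha}{\mu}$, the induction $x^{(n+1)}\le h(x^{(n)})\le h(C)\le C$, and the sub-subsequence passage to $\mu M_y\le\psi(M_x)$ and $d_0M_x+\psi(M_x)\le\beta M_y$ legitimate (non-negativity of $x^{(n)},y^{(n)}$, needed for $M_x,M_y\ge0$, is supplied by Lemma~1). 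In part (i) the identification $\mu L=\psi(c)<\alpha$ versus $L\ge\frac{\alpha}{\mu}$ correctly forces $\liminf x^{(n)}=+\infty$, and then $\mu L=\alpha$ gives $L=\frac{\alpha}{\mu}$; in part (ii) the elimination $\mu d_0(1+M_x)\le\alpha(\beta-\mu)$ is indeed incompatible with $\beta<\mu\bigl(1+\frac{d_0}{\alpha}\bigr)$ unless $M_x=0$, and the final step $\mu M_y\le 0$ finishes the claim. So this is a valid self-contained proof, in the same spirit as (though not verifiably identical to) the argument the author refers to in \cite{B}.
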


Denote
$$\Omega=\left\{(x,y)\in\mathbb{R}_+^{2}, \ \ 0\leq x\leq \frac{\alpha\beta}{\mu d_{0}}, \ \ 0\leq y\leq \frac{\alpha}{\mu}\right\}.$$

From the Theorem \ref{pr} one can get that for any initial point $\textbf{z}^{(0)}=(x^{(0)}, y^{(0)})$ taken from $ \mathbb R^2_+,$ in a trajectory, if all $y^{(n)}$ are greater than $\frac{\alpha}{\mu}$, then the limit point is $(+\infty, \frac{\alpha }{\mu })$. If $y^{(n_0)}$ is less than $\frac{\alpha}{\mu}$ for some $n_0$, then $y^{(n)}$ is less than $\frac{\alpha}{\mu}$ for all $n>n_0.$ One can see that if $y^{(n)}\leq\frac{\alpha}{\mu}$ then $x^{(n)}\leq \frac {\alpha\beta}{\mu d_{0}}$ \cite{B}. Therefore, it suffices to study the dynamics of the operator $W_{0}$ on the set $\Omega.$

\section{\bf Main Part}

Let us consider the dynamics of the operator $W_{0}$ given by (\ref{syst}) in the sets $\Omega$ under condition $\beta\geq\mu(1+\frac{d_0}{\alpha}).$
We divide the set $\Omega$ into four parts as follows.
$$\Omega_1=\left\{(x,y)\in\Omega, \ \ 0\leq x\leq x^*, \ \ 0\leq y\leq y^*\right\},$$
$$\Omega_2=\left\{(x,y)\in\Omega, \ \ x^*\leq x\leq\frac{\alpha\beta}{\mu d_{0}}, \ \ y^*\leq y\leq \frac{\alpha}{\mu}\right\},$$
$$\Omega_3=\left\{(x,y)\in\Omega, \ \ x^*< x\leq\frac{\alpha\beta}{\mu d_{0}}, \ \ 0\leq y\leq y^*\right\},$$
$$\Omega_4=\left\{(x,y)\in\Omega, \ \ 0\leq x<x^*, \ \ y^*< y\leq \frac{\alpha}{\mu}\right\},$$
where $x^*, y^*$ is the fixed point defined by $(\ref{x*y*})$.

\subsection{Invariant  sets.}\

A set $A$ is called invariant with respect to $W_{0}$ if $W_{0}(A)\subset A$.

\begin{lemma} The sets $\Omega_1, \ \Omega_2$ and $\Omega$ are invariant with respect to $W_{0}.$
\end{lemma}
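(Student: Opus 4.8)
The plan is to verify the inclusion $W_0(A)\subset A$ separately for each of the three sets $A\in\{\Omega_1,\Omega_2,\Omega\}$ by showing that the two coordinate functions
\[
F(x,y)=\beta y-\left(\frac{\alpha}{1+x}+d_0-1\right)x,\qquad G(x,y)=\frac{\alpha x}{1+x}+(1-\mu)y
\]
stay within the prescribed bounds whenever $(x,y)$ does. First I would record the monotonicity properties of $F$ and $G$ that follow from (\ref{parametr}): $G$ is increasing in both $x$ and $y$ (since $\frac{\alpha x}{1+x}$ is increasing and $0<\mu\le 1$), while $F$ is increasing in $y$ and, after checking the sign of $\partial F/\partial x = \frac{-\alpha}{(1+x)^2}-d_0+1$, one sees that under $\alpha+d_0\le 1$ this derivative is nonnegative on $x\ge 0$, so $F$ is increasing in $x$ as well. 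Thus $W_0$ is a cooperative (indeed coordinate-wise monotone) operator on $\mathbb{R}_+^2$, and on any coordinate rectangle $[a_1,b_1]\times[a_2,b_2]$ the image is contained in $[F(a_1,a_2),F(b_1,b_2)]\times[G(a_1,a_2),G(b_1,b_2)]$. This reduces each invariance claim to checking inequalities at the two corner points.

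For $\Omega_1=[0,x^*]\times[0,y^*]$: the lower corner $(0,0)$ gives $F(0,0)=0\ge 0$ and $G(0,0)=0\ge 0$; the upper corner $(x^*,y^*)$ gives $F(x^*,y^*)=x^*$ and $G(x^*,y^*)=y^*$ because $(x^*,y^*)$ is a fixed point. Hence $W_0(\Omega_1)\subset[0,x^*]\times[0,y^*]=\Omega_1$, provided one also checks $\Omega_1\subset\Omega$, i.e. that $x^*\le\frac{\alpha\beta}{\mu d_0}$ and $y^*\le\frac{\alpha}{\mu}$, which is immediate from (\ref{x*y*}). For $\Omega_2=[x^*,\frac{\alpha\beta}{\mu d_0}]\times[y^*,\frac{\alpha}{\mu}]$: the lower corner $(x^*,y^*)$ again maps to itself, giving the lower bounds; the upper corner is $\left(\frac{\alpha\beta}{\mu d_0},\frac{\alpha}{\mu}\right)$, and here I must verify $F\le\frac{\alpha\beta}{\mu d_0}$ and $G\le\frac{\alpha}{\mu}$. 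The second is easy: $G=\frac{\alpha x}{1+x}+(1-\mu)\frac{\alpha}{\mu}\le\alpha+\frac{\alpha}{\mu}-\alpha=\frac{\alpha}{\mu}$ using $\frac{\alpha x}{1+x}<\alpha$. The first requires estimating $F\left(\frac{\alpha\beta}{\mu d_0},\frac{\alpha}{\mu}\right)=\frac{\alpha\beta}{\mu}-\left(\frac{\alpha}{1+x}+d_0-1\right)\frac{\alpha\beta}{\mu d_0}$ and showing it is at most $\frac{\alpha\beta}{\mu d_0}$; after clearing denominators this becomes an inequality in the single quantity $x=\frac{\alpha\beta}{\mu d_0}$ that should follow from $d_0\le 1$ together with $\frac{\alpha}{1+x}>0$.

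Finally, $\Omega=[0,\frac{\alpha\beta}{\mu d_0}]\times[0,\frac{\alpha}{\mu}]$ is handled the same way: the lower corner $(0,0)$ maps to $(0,0)$, and the upper corner computation is exactly the one just done for $\Omega_2$, so $W_0(\Omega)\subset\Omega$. The main obstacle, and the only step involving real computation, is the corner inequality $F\left(\frac{\alpha\beta}{\mu d_0},\frac{\alpha}{\mu}\right)\le\frac{\alpha\beta}{\mu d_0}$; I expect it to reduce, after multiplying through by $\mu d_0(1+x)$, to something like $\alpha\cdot\frac{\alpha\beta}{\mu d_0}\le (2-d_0)\,\frac{\alpha\beta}{\mu d_0}\,(1+x)$ or a similar form, which holds comfortably under the standing hypotheses $d_0\le 1$, $\alpha+d_0\le 1$, $\mu\le 1$ — this is essentially the content already used implicitly in \cite{B} when arguing that $y^{(n)}\le\frac{\alpha}{\mu}$ forces $x^{(n)}\le\frac{\alpha\beta}{\mu d_0}$. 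A remark: one should double-check the monotonicity of $F$ in $x$ on the relevant range; if $\partial F/\partial x$ is not globally nonnegative, the corner argument for the $F$-upper bound must instead maximize $F$ over $x$ by a one-variable calculus argument, but the value of the maximum will still be controlled by the same inequality.
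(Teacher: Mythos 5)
Your proposal is correct, but it is organized differently from the paper's proof. The paper verifies invariance by direct chained estimates: for each of $\Omega_1$, $\Omega_2$, $\Omega$ it bounds $x'-x^*$, $y^*-y'$, etc., replacing $y$ by $y^*$ and $x$ by $x^*$ (or by the upper endpoints of the rectangle) step by step and closing each chain with the fixed-point identities $\beta y^*=d_0x^*+\mu y^*$ and $\frac{\alpha x^*}{1+x^*}=\mu y^*$; the coordinatewise monotonicity that justifies these replacements is used only implicitly (cooperativity of the system is made explicit later, in the proof of the periodic-point theorem). You instead prove the monotonicity once — $\partial F/\partial x=1-d_0-\frac{\alpha}{(1+x)^2}\geq 1-d_0-\alpha\geq 0$ on $x\geq 0$ under (\ref{parametr}), and similarly for $G$ — and then reduce invariance of each coordinate rectangle to evaluating the map at its two corner points, exploiting that $(0,0)$ and $(x^*,y^*)$ are fixed points. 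This gives a shorter, more conceptual argument that makes transparent why precisely these rectangles are invariant, at the cost of isolating explicitly the place where $\alpha+d_0\leq 1$ enters (your own cautionary remark about the sign of $\partial F/\partial x$ is resolved favorably: it is globally nonnegative on $x\geq0$, so no one-variable maximization is needed). The one step you left as ``expected'', the corner inequality $F\left(\frac{\alpha\beta}{\mu d_0},\frac{\alpha}{\mu}\right)\leq\frac{\alpha\beta}{\mu d_0}$, does hold and even more simply than your guessed reduction: since $\beta\cdot\frac{\alpha}{\mu}=d_0\cdot\frac{\alpha\beta}{\mu d_0}$, the terms cancel exactly and one gets $F\left(\frac{\alpha\beta}{\mu d_0},\frac{\alpha}{\mu}\right)=\frac{\alpha\beta}{\mu d_0}-\frac{\alpha^2\beta}{\mu d_0\left(1+\frac{\alpha\beta}{\mu d_0}\right)}<\frac{\alpha\beta}{\mu d_0}$, which is exactly the quantity $-\frac{\alpha^2\beta}{\mu d_0(1+x)}$ appearing in part (3) of the paper's proof, and requires no use of $d_0\leq 1$.
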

\begin{proof}
\textbf{(1)} Let $(x,y)\in\Omega_1$, i.e., $0\leq x\leq x^*, 0\leq y\leq y^*.$ Then
$$x'-x^*=\beta y+x\left(1-d_0-\frac{\alpha}{1+x}\right)-x^*\leq\beta y^*+x^*\left(1-d_0-\frac{\alpha}{1+x}\right)-x^*=$$
$$=\beta y^*-d_0x^*-\frac{\alpha x^*}{1+x}\leq\beta y^*-d_0x^*-\frac{\alpha x^*}{1+x^*}=\beta y^*-d_0x^*-\mu y^*=$$
$$=(\beta-\mu)\cdot\frac{\alpha(\beta-\mu)-\mu d_0}{\mu(\beta-\mu)}-d_0\cdot\left(\frac{\alpha(\beta-\mu)}{\mu d_0}-1\right)=0.$$
$$y^*-y'=y^*-(1-\mu)y-\frac{\alpha x}{1+x}\geq y^*-(1-\mu)y^*-\frac{\alpha x^*}{1+x^*}=\mu y^*-\frac{\alpha x^*}{1+x^*}=0.$$
Thus $(x',y')\in\Omega_1$, i.e., $W_0(\Omega_1)\subset\Omega_1.$

\textbf{(2)} Let $(x,y)\in\Omega_2$, i.e., $x^*\leq x\leq\frac{\alpha\beta}{\mu d_{0}}, y^*\leq y\leq \frac{\alpha}{\mu}.$ Then
$$x^*-x'=x^*-\beta y-x\left(1-d_0-\frac{\alpha}{1+x}\right)\leq x^*-\beta y^*-x^*\left(1-d_0-\frac{\alpha}{1+x}\right)=$$
$$=d_0x^*+\frac{\alpha x^*}{1+x}-\beta y^*\leq d_0x^*+\frac{\alpha x^*}{1+x^*}-\beta y^*=d_0x^*+\mu y^*-\beta y^*=0$$
$$y'-y^*=\frac{\alpha x}{1+x}+(1-\mu)y-y^*\geq\frac{\alpha x^*}{1+x^*}+(1-\mu)y^*-y^*=0.$$
So $(x',y')\in\Omega_2$, i.e., $W_0(\Omega_2)\subset\Omega_2.$

\textbf{(3)} Let $(x,y)\in\Omega$, i.e., $0\leq x\leq \frac{\alpha\beta}{\mu d_{0}},\ 0\leq y\leq \frac{\alpha}{\mu}.$ Then
$$x'-\frac{\alpha\beta}{\mu d_{0}}=\beta y+x\left(1-d_{0}-\frac{\alpha}{1+x}\right)-\frac{\alpha\beta}{\mu d_{0}}\leq\frac{\alpha\beta}{\mu}+\frac{\alpha\beta}{\mu d_{0}}\left(1-d_{0}-\frac{\alpha}{1+x}\right)-\frac{\alpha\beta}{\mu d_{0}}=$$
$$=\frac{\alpha\beta}{\mu}-\frac{\alpha\beta}{\mu d_{0}}\left(d_{0}+\frac{\alpha}{1+x^*}\right)=-\frac{\alpha^2\beta}{\mu d_0(1+x)}<0.$$

$$\frac{\alpha}{\mu}-y'=\frac{\alpha}{\mu}-(1-\mu)y-\frac{\alpha x}{1+x}\geq \frac{\alpha}{\mu}-(1-\mu)\frac{\alpha}{\mu}-\frac{\alpha x}{1+x}=\frac{\alpha}{1+x}>0.$$\\
Thus $(x',y')\in\Omega$, i.e., $W_0(\Omega)\subset\Omega.$
\end{proof}

\subsection{Periodic  points.}\

A point $\textbf{z}$ in $W_0$ is called periodic point of $W_0$ if there exists $p$ so that $W_0^{p}(\textbf{z})=\textbf{z}$. The smallest positive integer $p$ satisfying $W_0^{p}(\textbf{z})=\textbf{z}$ is called the prime period or least period of the point $\textbf{z}.$

\begin{thm}\label{perT} For $p\geq 2$ the operator (\ref{syst}) does not have any $p$-periodic point in the set $\mathbb{R}^{2}_{+}.$
\end{thm}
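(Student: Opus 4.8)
The plan is to rule out $p$-periodic points with $p\ge 2$ by combining the monotonicity structure of $W_0$ with the invariant-set decomposition of $\Omega$ already established. First I would observe that by Theorem \ref{pr} any trajectory that is not eventually trapped in $\Omega$ has $y^{(n)}>\frac{\alpha}{\mu}$ for all $n$ and satisfies $x^{(n)}\to+\infty$; such a trajectory is unbounded and hence contains no periodic point. Thus it suffices to search for periodic points inside $\Omega$, and in fact inside the four pieces $\Omega_1,\Omega_2,\Omega_3,\Omega_4$. Since $\Omega_1$ and $\Omega_2$ are invariant and each contains the attracting fixed point on its boundary (namely $(0,0)$ for $\Omega_1$ and $(x^*,y^*)$ for $\Omega_2$ — note $(x^*,y^*)$ is the only fixed point in $\Omega_2$ and it is attracting), I would show that on each of these two sets $W_0$ is monotone (cooperative in the appropriate partial order coming from the signs of the partial derivatives of $f(x,y)=\beta y-(\frac{\alpha}{1+x}+d_0-1)x$ and $g(x,y)=\frac{\alpha x}{1+x}+(1-\mu)y$): both $f$ and $g$ are increasing in $y$, $g$ is increasing in $x$, and $f$ is increasing in $x$ at least on the region where $\frac{\alpha}{(1+x)^2}\le 1-d_0$, which holds on all of $\Omega$ since $\alpha+d_0\le 1$. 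For a (strongly) cooperative planar map, a classical result (e.g.\ from the theory of monotone maps, \cite{BBK}) says there are no periodic orbits of period $\ge 2$: the second iterate $W_0^2$ is also cooperative, an orbit of period $2$ would be a fixed point of $W_0^2$, and for cooperative maps in the plane the two points of a would-be 2-cycle are ordered, forcing a contradiction with monotonicity. So the whole analysis reduces to the two "mixed" regions $\Omega_3$ and $\Omega_4$.

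Next I would handle $\Omega_3$ and $\Omega_4$ directly. The key observation is that $W_0$ pushes points out of these regions into the invariant ones: I would show $W_0(\Omega_3)\subset\Omega_2\cup\Omega_3$ and $W_0(\Omega_4)\subset\Omega_1\cup\Omega_4$, or more precisely track the $y$-coordinate. On $\Omega_4$ one has $0\le x<x^*$, $y^*<y\le\frac{\alpha}{\mu}$, so $x'=\beta y+x(1-d_0-\frac{\alpha}{1+x})<\beta y+x^*(1-d_0-\frac{\alpha}{1+x})$; combined with the inequalities used in part (1) of the previous lemma this yields $x'<x^*$ again, so the $x$-coordinate stays below $x^*$, while the $y$-coordinate is controlled by $y'=\frac{\alpha x}{1+x}+(1-\mu)y$. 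If the orbit ever has $y'\le y^*$ it enters $\Omega_1$ and is then trapped there, so a periodic orbit inside $\Omega_4$ would have to satisfy $y^{(n)}>y^*$ and $x^{(n)}<x^*$ for all $n$; but then $y^{(n+1)}-y^{(n)}=\frac{\alpha x^{(n)}}{1+x^{(n)}}-\mu y^{(n)}<\frac{\alpha x^*}{1+x^*}-\mu y^*=0$, so $y^{(n)}$ is strictly decreasing, contradicting periodicity. The symmetric computation on $\Omega_3$ (where $x>x^*$, $y\le y^*$) shows $y^{(n)}$ is strictly increasing along any orbit that stays in $\Omega_3$, again contradicting periodicity; and an orbit leaving $\Omega_3$ lands in $\Omega_2$ and is trapped. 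Hence no periodic point of period $\ge 2$ lies in $\Omega_3$ or $\Omega_4$ either.

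Finally I would assemble the pieces: a periodic point of $W_0$ of prime period $p\ge 2$ would have to lie in $\mathbb R^2_+=\Omega\cup(\mathbb R^2_+\setminus\Omega)$; it cannot lie outside $\Omega$ (unbounded orbit), and inside $\Omega$ its whole orbit lies in $\Omega_1\cup\Omega_2\cup\Omega_3\cup\Omega_4$. An orbit meeting $\Omega_1$ or $\Omega_2$ stays there by invariance and cannot be a nontrivial cycle by the monotone-map argument; an orbit meeting $\Omega_3$ or $\Omega_4$ either gets absorbed into $\Omega_2$ or $\Omega_1$ (reducing to the previous case, but then it is not periodic with all points in $\Omega_3\cup\Omega_4$) or stays in $\Omega_3$ (resp.\ $\Omega_4$) with strictly monotone $y$-coordinate, impossible for a cycle. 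Either way we reach a contradiction. The main obstacle I anticipate is making the monotone-map step on $\Omega_1$ and $\Omega_2$ airtight — one must verify that $f$ is genuinely increasing in $x$ throughout these regions (which uses $\alpha+d_0\le 1$ via $\frac{\alpha}{(1+x)^2}\le\alpha\le 1-d_0$) and cite or reprove the precise "no 2-cycles for planar cooperative maps" statement; everything else is elementary sign-chasing with the same inequalities already exploited in the invariance lemma.
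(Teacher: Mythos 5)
Your proposal contains two genuine gaps, and the first one concerns exactly the step that constitutes the hard core of the paper's proof. You dispose of 2-cycles on $\Omega_1\cup\Omega_2$ by invoking a ``classical result'' that a (strongly) cooperative planar map has no periodic orbits of period $\geq 2$, justified by the claim that ``the two points of a would-be 2-cycle are ordered.'' That justification is backwards: if the two points of a 2-cycle were ordered, monotonicity would immediately force them to coincide, so any 2-cycle of a cooperative map consists of \emph{unordered} points, and monotonicity alone cannot exclude such cycles. Indeed the affine, strictly cooperative map $W(x,y)=(0.5x+1.5y-1,\;1.5x+0.5y-1)$ (linear part with all entries positive, eigenvalues $2$ and $-1$) has the 2-cycles $\{(1+t,1-t),(1-t,1+t)\}$, so no theorem of the generality you cite exists. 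Cooperativity only buys the Sharkovskii-type reduction from arbitrary periods to period $2$, which is precisely how the paper uses it (via the Wang--Jiang reference); the absence of period-2 points is then established there by explicitly solving $W_0^2(\textbf{z})=\textbf{z}$: adding the two equations to express $y$ through $x$, reducing to the cubic factor in (\ref{kubik}), dividing out the fixed-point roots $x=0$ and $x=x^*$, and showing in both cases $B_0\leq 0$ and $B_0>0$ that the resulting quadratic has all positive coefficients, hence no positive root. Your proposal has no substitute for this computation.

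The second gap is in your treatment of $\Omega_3$ and $\Omega_4$. To get $x'<x^*$ on $\Omega_4$ you reuse the estimate from part (1) of the invariance lemma, but that estimate bounds $\beta y$ by $\beta y^*$, which is exactly what fails on $\Omega_4$, where $y>y^*$. Concretely, take $\alpha=0.5$, $d_0=0.5$, $\mu=1$, $\beta=3$ (so $x^*=1$, $y^*=0.25$, $\alpha/\mu=0.5$): the point $(0,0.5)\in\Omega_4$ is mapped to $(1.5,0)\in\Omega_3$. Hence $W_0(\Omega_4)\not\subset\Omega_1\cup\Omega_4$, the claimed absorption fails, and your case analysis does not exclude periodic orbits that hop between $\Omega_3$ and $\Omega_4$ (or pass through $\Omega_3\cup\Omega_4$ into neither invariant set in the way you describe). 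The paper avoids this entirely because its algebraic elimination of 2-cycles is carried out on all of $\mathbb{R}^2_+$, with no decomposition into the four regions.
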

\begin{proof}
Let us first describe periodic points with $p=2$ on $\mathbb{R}^{2}_{+},$ in this case the equation is $W_{0}(W_{0}(\textbf{z}))=\textbf{z}.$
That is
\begin{equation}\label{per2}
\left\{
  \begin{array}{ll}
   x=\beta\left(\frac{\alpha x}{1+x}+(1-\mu)y\right)-\frac{\alpha\left(\beta y-\frac{\alpha x}{1+x}+(1-d_0)x\right)}{1+\beta y-\frac{\alpha x}{1+x}+(1-d_0)x}+\left(1-d_0\right)\left(\beta y-\frac{\alpha x}{1+x}+(1-d_0)x\right), \\[3mm]
   y=\frac{\alpha(\beta y-\frac{\alpha x}{1+x}+(1-d_0)x)}{1+\beta y-\frac{\alpha x}{1+x}+(1-d_0)x}+(1-\mu)\left(\frac{\alpha x}{1+x}+(1-\mu)y\right).\\
  \end{array}
\right.
\end{equation}
By adding first and second equations of the system (\ref{per2}) one can find $y:$
\begin{equation}\label{yper}
y=\frac{x\left(d_0(2-d_0)(1+x)-\alpha(\beta-\mu+d_0)\right)}{(1+x)\left(\beta(2-\mu-d_0)-\mu(2-\mu)\right)}.
\end{equation}
If $\beta\geq\mu(1+\frac{d_0}{\alpha})$, then  $\beta(2-\mu-d_0)>\mu(2-\mu).$ Clearly, if $d_0(2-d_0)\geq\alpha(\beta-\mu+d_0)$ then $x\geq0 \ \Rightarrow \ y\geq0$.  If $d_0(2-d_0)<\alpha(\beta-\mu+d_0)$ then $x\geq\frac{\alpha(\beta-\mu+d_0)}{d_0(2-d_0)}-1 \ \Rightarrow \ y\geq0$.

We obtain the following equation by substituting the $y$ defined in (\ref{yper}) into the second equation of the system (\ref{per2}):
\begin{equation}\label{kubik}
x\left(B_1x^3+B_2x^2+B_3x+B_4\right)=0,
\end{equation}
where
\begin{equation}\label{koef}
\begin{array}{lllllllll}
B_1=A_0A_3,\\[3mm]
B_2=2A_0A_3+A_1A_3+A_0A_4-\alpha A_0,\\[3mm]
B_3=A_0A_3+A_1A_3+A_3+A_0A_4+A_1A_4-2\alpha A_0-A_2,\\[3mm]
B_4=A_3+A_4-\alpha A_0-A_2,\\[3mm]
A_0=1-d_0+\frac{\beta d_0(2-d_0)}{\beta(2-\mu-d_0)-\mu(2-\mu)},\\[3mm]
A_1=1-\alpha-\frac{\alpha\beta (\beta-\mu+d_0)}{\beta(2-\mu-d_0)-\mu(2-\mu)},\\[3mm]
A_2=-\alpha^2\left(1+\frac{\beta (\beta-\mu+d_0)}{\beta(2-\mu-d_0)-\mu(2-\mu)}\right),\\[3mm]
A_3=\frac{\mu(2-\mu)d_0(2-d_0)}{\beta(2-\mu-d_0)-\mu(2-\mu)},\\[3mm]
A_4=-\left(\alpha(1-\mu)+\frac{\alpha\mu(2-\mu)(\beta-\mu+d_0)}{\beta(2-\mu-d_0)-\mu(2-\mu)}\right).\\[3mm]
\end{array}
\end{equation}
Obviously, the fixed points $x=0$ and $x=x^*$ (see (\ref{x*y*})) are solutions of the equation (\ref{kubik}). Thus, dividing the left side of the equation (\ref{kubik}) by $x(x-x^*)$, we get the following quadratic equation:
\begin{equation}\label{kvadrat}
B_1x^2+(B_2+B_1x^*)x+B_3+B_2x^*+B_1x{^*}^2=0.
\end{equation}

Denote
$$B_0=\frac{\alpha(\beta-\mu+d_0)}{d_0(2-d_0)}-1.$$
If $B_0\leq0$ then
\begin{equation}\label{b0<0}
(2-\mu)(2-d_0)-\alpha(2+\beta-\mu)\geq0.
\end{equation}
Indeed,
$(2-\mu)(2-d_0)-\alpha(2+\beta-\mu)=(2-\mu)(2-d_0)-2\alpha-\alpha(\beta-\mu)\geq(2-\mu)(2-d_0)-2\alpha-d_0(2-d_0)+\alpha d_0=(2-d_0)(2-\mu-d_0-\alpha)\geq0.$

We write the coefficients of the equation (\ref{kvadrat}) as following:
\begin{equation}\label{koef+}
\begin{array}{lllllllll}
B_1=A_0A_3,\\[3mm]
B_2+B_1x^*=\frac{\mu d_0(\beta-\mu)(2-\mu)(2-d_0)\left((2-\mu)(2-d_0)-\alpha(2+\beta-\mu)\right)}{\left(\beta(2-\mu-d_0)-\mu(2-\mu)\right)^2},\\[3mm]
B_3+B_2x^*+B_1x{^*}^2=\frac{\mu d_0\left((2-\mu)(2-d_0)-\alpha(2+\beta-\mu)\right)}{\beta(2-\mu-d_0)-\mu(2-\mu)}.\\[3mm]
\end{array}
\end{equation}

It can be seen from (\ref{parametr}) and (\ref{b0<0}) that if $B_ 0\leq 0$, then the coefficients of the equation (\ref{kvadrat}) are positive. Therefore, there is no any positive solution of the equation (\ref{kvadrat}).

Let's consider the case $B_0>0$. For $y$ to be non-negative, it must be $x\geq B_0$. So, we solve the equation (\ref{kvadrat}) in  $[B_0, +\infty)$. In the equation (\ref{kvadrat}) we put  $x+B_0$ instead of $x$ and get:
\begin{equation}\label{B0kvadrat}
B_1x^2+(B_2+B_1x^*+2B_0B_1)x+B_3+B_2(x^*+B_0)+B_1(x{^*}^2+B_0^2+B_0x^*)=0.
\end{equation}
Need to find positive solutions (\ref{B0kvadrat}) instead of looking for all solutions (\ref{kvadrat}) in $[B_0, +\infty)$.

If $B_0>0$, then
\begin{equation}\label{>0}
(\beta-\mu)(1-d_0)>d_0.
\end{equation}
Indeed, $(\beta-\mu)(1-d_0)-d_0\geq\alpha(\beta-\mu)-d_0> d_0(2-d_0-\alpha)-d_0=d_0(1-d_0-\alpha)\geq0.$

If $(\beta-\mu)(1-d_0)>d_0$, then
\begin{equation}\label{>00}
(\beta-\mu)^2(1-d_0)>(\beta-\mu+1)d_0^2.
\end{equation}
Indeed, $(\beta-\mu)^2(1-d_0)-(\beta-\mu+1)d_0^2>(\beta-\mu)d_0-(\beta-\mu)d_0^2-d_0^2=(\beta-\mu)d_0(1-d_0)-d_0^2>d_0^2-d_0^2=0.$

We write the coefficients of the equation (\ref{B0kvadrat}) in the following form:
\begin{equation}\label{B0koef+}
\begin{array}{llll}
B_1=A_0A_3,\\[3mm]
B_2+B_1x^*+2B_0B_1=B_0B_1+\frac{\mu(2-\mu)\left((d_0(2-d_0)+\alpha((\beta-\mu)(1-d_0)-d_0)\right)}{\beta(2-\mu-d_0)-\mu(2-\mu)},\\[3mm]
B_3+B_2(x^*+B_0)+B_1(x{^*}^2+B_0^2+B_0x^*)=\frac{\alpha\mu(2+\beta-\mu)(2-d_0)(1-\mu)d_0^2}{d_0(2-d_0)(\beta(2-\mu-d_0)-\mu(2-\mu))}+\\[3mm]
+\frac{\alpha\mu\alpha(2-\mu)((\beta-\mu)^2(1-d_0)-(\beta-\mu+1)d_0^2)}{d_0(2-d_0)(\beta(2-\mu-d_0)-\mu(2-\mu))}.\\[3mm]
\end{array}
\end{equation}
From (\ref{parametr}), (\ref{>0}), (\ref{>00}) one can see that all coefficients of the equation (\ref{B0kvadrat}) are positive, i.e. there is no any positive solution. Thus, there is no positive solution for the equation (\ref{B0kvadrat}).

Finally, the operator $W_0$ does not have two periodic points in the set $\mathbb{R}^{2}_{+}$.

The Jacobian matrix of the operator $W_0$ has a sign configuration at all point $\textbf{z}$
$$sign(J_{W_0})=\left(%
\begin{array}{cc}
  + & + \\
  + & + \\
\end{array}%
\right).$$
Therefore, the system (\ref{syst}) is strictly cooperative.
Since system (\ref{syst}) is strictly cooperative, Sharkovskii’s ordering holds for periodic points \cite{WJ} and so the no existence of a two-period point would imply the no presence of periodic points of all other periods.
\end{proof}

\subsection{Lyapunov functions.}\

By using Lyapunov functions, one can handle with $\omega$-limits point. Recall the definition of a Lyapunov function.
\begin{defn}\label{Lyap} A continuous functional $\varphi: \Omega\rightarrow\mathbb{R}$ is called a Lyapunov function for a operator $W$ if $\varphi(W(\textbf{z}))\leq\varphi(\textbf{z})$ for all $\textbf{z}$ (or $\varphi(W(\textbf{z}))\geq\varphi(\textbf{z})$ for all $\textbf{z}$).
\end{defn}
\begin{pro}\label{Lpro} For any $\beta, \mu$ (under condition (\ref{parametr})), the function $$\varphi(\textbf{z})=\mu x+\beta y$$ is Lyapunov function for the operator $W_0$ defined by (\ref{syst}).
\end{pro}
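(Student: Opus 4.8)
The plan is to verify Definition \ref{Lyap} by a direct computation of $\varphi(W_0(\textbf{z}))-\varphi(\textbf{z})$. Writing $\textbf{z}=(x,y)$ and $W_0(\textbf{z})=(x',y')$ with $x',y'$ as in (\ref{syst}), I would first expand
$$\varphi(W_0(\textbf{z}))=\mu x'+\beta y'=\mu\left(\beta y-\left(\frac{\alpha}{1+x}+d_0-1\right)x\right)+\beta\left(\frac{\alpha x}{1+x}+(1-\mu)y\right).$$
The point of choosing the weights $\mu$ and $\beta$ is that the terms $\mu\beta y$ coming from the two equations cancel, while the two copies of $\dfrac{\alpha x}{1+x}$ combine with total coefficient $\beta-\mu$. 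Collecting terms gives the identity
$$\varphi(W_0(\textbf{z}))=\varphi(\textbf{z})+x\left(\frac{(\beta-\mu)\alpha}{1+x}-\mu d_0\right).$$

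Next I would make the sign of the increment transparent by factoring the bracket through the interior fixed point. By (\ref{x*y*}) one has $1+x^*=\dfrac{\alpha(\beta-\mu)}{\mu d_0}$, hence
$$\frac{(\beta-\mu)\alpha}{1+x}-\mu d_0=\frac{\alpha(\beta-\mu)-\mu d_0(1+x)}{1+x}=\frac{\mu d_0\,(x^*-x)}{1+x},$$
so that
$$\varphi(W_0(\textbf{z}))-\varphi(\textbf{z})=\frac{\mu d_0\,x\,(x^*-x)}{1+x}.$$
On $\Omega$ we have $x\ge 0$ and $1+x>0$, and by (\ref{parametr}) $\mu d_0>0$; therefore the sign of $\varphi(W_0(\textbf{z}))-\varphi(\textbf{z})$ equals the sign of $x^*-x$.

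Finally I would read the conclusion off the invariant decomposition of $\Omega$. On the invariant set $\Omega_1$ one has $0\le x\le x^*$, so $\varphi(W_0(\textbf{z}))\ge\varphi(\textbf{z})$ and $\varphi$ is a Lyapunov function (of nondecreasing type) there; on the invariant set $\Omega_2$ one has $x\ge x^*$, so $\varphi(W_0(\textbf{z}))\le\varphi(\textbf{z})$ and $\varphi$ is a Lyapunov function (of nonincreasing type) there; and when $\beta=\mu\left(1+\frac{d_0}{\alpha}\right)$, so that $x^*=0$, the estimate $\varphi(W_0(\textbf{z}))\le\varphi(\textbf{z})$ holds on all of $\Omega$. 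I do not expect a genuine obstacle in this argument: it is essentially one algebraic identity. The only step that needs a little care is recognizing the factorization $\frac{(\beta-\mu)\alpha}{1+x}-\mu d_0=\frac{\mu d_0(x^*-x)}{1+x}$, since this is what both pins down the monotonicity direction on each $\Omega_i$ and connects $\varphi$ to the fixed point $(x^*,y^*)$, preparing its later use for proving global attraction.
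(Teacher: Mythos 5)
Your proof is correct and follows essentially the same route as the paper: a direct computation of $\varphi(W_0(\textbf{z}))-\varphi(\textbf{z})$ and the factorization $\frac{d_0\mu x}{1+x}(x^*-x)$, yielding monotonicity of $\varphi$ on $\Omega$ when $\beta=\mu\left(1+\frac{d_0}{\alpha}\right)$ (where $x^*=0$) and on $\Omega_1$, $\Omega_2$ respectively when $\beta>\mu\left(1+\frac{d_0}{\alpha}\right)$. The only (cosmetic) difference is that you treat both cases through one unified identity, whereas the paper computes the two cases separately.
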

\begin{rk} If $\beta=\mu\left(1+\frac{d_{0}}{\alpha}\right)$, then $\textbf{z}=(x,y)\in\Omega$ and if $\beta>\mu\left(1+\frac{d_{0}}{\alpha}\right)$, then $\textbf{z}=(x,y)\in\Omega_1\bigcup\Omega_2.$
\end{rk}
\begin{proof} Let $\beta=\mu\left(1+\frac{d_{0}}{\alpha}\right).$ Then
$$\varphi(W_0(\textbf{z}))=\mu x'+\beta y'=\mu x+\beta y-\frac{d_0\mu x^2}{1+x}=\varphi(\textbf{z})-\frac{d_0\mu x^2}{1+x}.$$
Thus $\varphi(W_0(\textbf{z}))\leq\varphi(\textbf{z})$ for all $\textbf{z}=(x,y)\in\Omega$, that is the function $\varphi(\textbf{z})$ is a Lyapunov function for the operator $W_0$.

Let $\beta>\mu\left(1+\frac{d_{0}}{\alpha}\right).$
Let's look at the expression.
$$\varphi(W_0(\textbf{z}))-\varphi(\textbf{z})=\mu x'+\beta y'-(\mu x+\beta y)=(\beta-\mu)\frac{\alpha x}{1+x}-d_0\mu x=$$
$$=\frac{d_0\mu x}{1+x}\left(\frac{\alpha(\beta-\mu)}{d_0\mu}-1-x\right)=\frac{d_0\mu x}{1+x}\left(x^*-x\right).$$
If $\textbf{z}=(x,y)\in\Omega_1$, then $\varphi(W_0(\textbf{z}))\geq\varphi(\textbf{z})$, if $\textbf{z}=(x,y)\in\Omega_2$, then $\varphi\left(W_0(\textbf{z})\right)\leq\varphi(\textbf{z}).$

Hence, the function $\varphi(\textbf{z})$ is increasing on the set $\Omega_1$ and decreasing on the set $\Omega_2$. The function $\varphi(\textbf{z})$ is a Lyapunov function for the operator $W_0$.
\end{proof}

\subsection{The $\omega$-limit set}\

The problem of describing the $\omega$-limit set of a trajectory is of great importance in the theory of dynamical systems.

The following theorem  describes the trajectory of any initial point $\left(x^{(0)}, y^{(0)}\right)$ in $\Omega$.
\begin{thm}\label{omega} For the operator $W_{0}$ given by (\ref{syst}) (i.e. under condition (\ref{parametr})) the following hold:
\begin{itemize}
\item[(i)] If $\beta=\mu\left(1+\frac{d_{0}}{\alpha}\right)$, then for any initial point $\left(x^{(0)}, y^{(0)}\right)\in\Omega\setminus Fix W_0$  $$\lim\limits_{n\to \infty}x^{(n)}=0, \ \lim\limits_{n\to \infty}y^{(n)}=0;$$
\item[(ii)] If $\beta>\mu\left(1+\frac{d_{0}}{\alpha}\right)$, then for any initial point $\left(x^{(0)}, y^{(0)}\right)\in\Omega\setminus Fix W_0$ $$\lim\limits_{n\to \infty}x^{(n)}=x^*, \ \lim\limits_{n\to \infty}y^{(n)}=y^*;$$
\end{itemize}
where $\left(x^{(n)}, y^{(n)}\right)=W_0^n\left(x^{(0)}, y^{(0)}\right)$, with $W_{0}^n$ is $n$-th iteration of $W_{0}$  and $(x^*,y^*)$ is the fixed point defined by $(\ref{x*y*}).$
\end{thm}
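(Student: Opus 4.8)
The plan is to combine three ingredients already in hand: the sets $\Omega$, $\Omega_1$ and $\Omega_2$ are compact and $W_0$-invariant; $\varphi(\mathbf z)=\mu x+\beta y$ is a Lyapunov function, with the increments $\varphi(W_0\mathbf z)-\varphi(\mathbf z)$ computed explicitly in the proof of Proposition~\ref{Lpro}; and $W_0$ has no periodic points of period $\ge 2$ (Theorem~\ref{perT}). For $\mathbf z^{(0)}$ in any of these compact invariant sets, the $\omega$-limit set $\omega(\mathbf z^{(0)})$ is nonempty, compact, and invariant, with $W_0\bigl(\omega(\mathbf z^{(0)})\bigr)=\omega(\mathbf z^{(0)})$; and whenever $\varphi$ is monotone along the orbit, $\varphi(\mathbf z^{(n)})$ converges to a value $v$, so $\varphi\equiv v$ on $\omega(\mathbf z^{(0)})$. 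Together with the explicit increment of $\varphi$, this confines $\omega(\mathbf z^{(0)})$ to the zero set of that increment, and then invariance pins it down.

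For part~(i), when $\beta=\mu(1+d_0/\alpha)$ one has $\varphi(W_0\mathbf z)-\varphi(\mathbf z)=-d_0\mu x^2/(1+x)\le 0$ on all of $\Omega$, vanishing only on $\{x=0\}$, so $\omega(\mathbf z^{(0)})\subset\{x=0\}$. Since $W_0(0,y)=(\beta y,(1-\mu)y)$ leaves $\{x=0\}$ unless $y=0$, invariance forces $\omega(\mathbf z^{(0)})=\{(0,0)\}$, i.e.\ $\mathbf z^{(n)}\to(0,0)$.

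For part~(ii) I would write $\Omega=\Omega_1\cup\Omega_2\cup\Omega_3\cup\Omega_4$ and use $\varphi(W_0\mathbf z)-\varphi(\mathbf z)=\tfrac{d_0\mu x}{1+x}(x^*-x)$. On the invariant set $\Omega_2$, where $\varphi$ is nonincreasing, this increment vanishes only on $\{x=x^*\}$, and since $W_0(x^*,y)$ has first coordinate $x^*+\beta(y-y^*)$, invariance of $\omega(\mathbf z^{(0)})$ forces it to equal $\{(x^*,y^*)\}$. On the invariant set $\Omega_1$, where $\varphi$ is nondecreasing, with $\mathbf z^{(0)}\ne(0,0)$ we get $\varphi(\mathbf z^{(n)})\uparrow v\ge\varphi(\mathbf z^{(0)})>0$, hence $\omega(\mathbf z^{(0)})\subset(\{x=0\}\cup\{x=x^*\})\cap\{\varphi=v\}$, a set of at most the two points $(0,v/\beta)$ and $(x^*,(v-\mu x^*)/\beta)$. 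A one-point $\omega$-limit set must be a fixed point, which $v>0$ forbids from being $(0,0)$; a two-point $\omega$-limit set cannot consist of two fixed points, since their $\varphi$-values would then coincide, forcing $0=\mu x^*+\beta y^*$, so $W_0$ would interchange its two points, making it a $2$-periodic orbit, contradicting Theorem~\ref{perT}. Hence $\omega(\mathbf z^{(0)})=\{(x^*,y^*)\}$ on $\Omega_1\setminus\{(0,0)\}$ too. Finally, for $\mathbf z^{(0)}=(x_0,y_0)\in\Omega_3$ one has $(x^*,y_0)\preceq\mathbf z^{(0)}\preceq(x_0,y^*)$ with $(x^*,y_0)\in\Omega_1$ and $(x_0,y^*)\in\Omega_2$, and for $\mathbf z^{(0)}\in\Omega_4$ one has $(x_0,y^*)\preceq\mathbf z^{(0)}\preceq(x^*,y_0)$ with $(x_0,y^*)\in\Omega_1$ and $(x^*,y_0)\in\Omega_2$; since $W_0$ is order-preserving (its Jacobian has nonnegative entries, as used in the proof of Theorem~\ref{perT}), these inequalities persist under iteration, and both bounding points generate trajectories converging to $(x^*,y^*)$ by the cases already settled, so $\mathbf z^{(n)}\to(x^*,y^*)$ by squeezing.

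I expect the delicate step to be the analysis on $\Omega_1$ in part~(ii): the Lyapunov inequality alone only traps $\omega(\mathbf z^{(0)})$ in a union of two line segments, and excluding every invariant subset of those segments other than $\{(x^*,y^*)\}$ is precisely where one needs both the surjectivity $W_0\bigl(\omega(\mathbf z^{(0)})\bigr)=\omega(\mathbf z^{(0)})$ and the absence of $2$-cycles (Theorem~\ref{perT}). A more routine but necessary point is to check the order-interval inclusions used for $\Omega_3$ and $\Omega_4$ and to confirm that $W_0$ is order-preserving on $\Omega$, which rests on $\alpha+d_0\le 1$ and $0<\mu\le 1$.
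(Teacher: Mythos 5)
Your proof is correct, and while it runs on the same engine as the paper -- the Lyapunov function $\varphi(\mathbf z)=\mu x+\beta y$ with its explicit increment from Proposition~\ref{Lpro} and the invariant sets $\Omega_1,\Omega_2,\Omega$ -- the way you close the argument is genuinely different at the two delicate points. For $\Omega_1\cup\Omega_2$ the paper argues directly with sequences: $\lim_n\varphi(\mathbf z^{(n)})$ exists and is nonzero, the increment tends to $0$, and it then reads off ``$\lim x^{(n)}=0$ or $x^*$'' (tacitly assuming the limit of $x^{(n)}$ exists) and kills the $0$ option by the contradiction $\lim\varphi=0$; you instead invoke the LaSalle-type facts $W_0(\omega(\mathbf z^{(0)}))=\omega(\mathbf z^{(0)})$ and $\varphi\equiv v$ on the limit set, trap $\omega(\mathbf z^{(0)})$ in at most two points, and rule out the two-point configurations via surjectivity together with the absence of $2$-cycles (Theorem~\ref{perT}, which the paper never uses in this proof) -- this removes the implicit ``the limit exists'' step. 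For $\Omega_3,\Omega_4$ the paper only treats trajectories that \emph{remain} in $\Omega_3$ (resp.\ $\Omega_4$) for all $n$, where both coordinates are monotone, and leaves the exiting trajectories to the reader; your order-interval squeeze $(x^*,y_0)\preceq\mathbf z^{(0)}\preceq(x_0,y^*)$ (and its $\Omega_4$ analogue), combined with the cooperativity of $W_0$ (all partial derivatives of $(x',y')$ are nonnegative under (\ref{parametr}); note nonnegativity, not the strict positivity asserted in the proof of Theorem~\ref{perT}, is all you need), covers every initial point in $\Omega_3\cup\Omega_4$ uniformly. In short, your route costs a few standard dynamical-systems facts ($\omega$-limit sets of a continuous map on a compact invariant set are nonempty, compact and mapped onto themselves) but buys a tighter argument that closes the gaps in the paper's own treatment; the paper's version is more elementary and computational. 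One small point to make explicit when writing it up: in the two-point case on $\Omega_1$, besides the $2$-cycle and ``two fixed points'' configurations you should note that ``one fixed point plus a point mapping to it'' is excluded precisely by $W_0(\omega(\mathbf z^{(0)}))=\omega(\mathbf z^{(0)})$, which your sketch does invoke.
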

\begin{proof} We have
\begin{equation}\label{x^ny^n} x^{(n)}=\beta y^{(n-1)}-\frac{\alpha x^{(n-1)}}{1+x^{(n-1)}}-d_{0}x^{(n-1)}+x^{(n-1)},\ y^{(n)}=\frac{\alpha x^{(n-1)}}{1+x^{(n-1)}}-\mu y^{(n-1)}+y^{(n-1)}.\end{equation}

First, we prove the assertion $(i).$
Let $\beta=\mu\left(1+\frac{d_{0}}{\alpha}\right).$ The function $\varphi(\textbf{z})$ is a decreasing Lyapunov function by Proposition (\ref{Lpro}) and bounded from below it follows that
existence of the $\lim\limits_{n\rightarrow\infty}\varphi(\textbf{z}^{(n)}).$
$$\lim\limits_{n\rightarrow\infty}\varphi(\textbf{z}^{(n)})=\lim\limits_{n\rightarrow\infty}\left(\varphi(\textbf{z}^{(n-1)})+\frac{\alpha(\beta-\mu)\left(x^{(n-1)}\right)^2}{1+x^{(n-1)}}\right)= \lim\limits_{n\rightarrow\infty}\varphi(\textbf{z}^{(n-1)})+$$
$$+\alpha(\beta-\mu)\lim\limits_{n\rightarrow\infty}\frac{\left(x^{(n-1)}\right)^2}{1+x^{(n-1)}},\ \Rightarrow \ \lim\limits_{n\rightarrow\infty}\frac{\left(x^{(n-1)}\right)^2}{1+x^{(n-1)}}=0, \ \Rightarrow \ \lim\limits_{n\rightarrow\infty}x^{(n-1)}=0.$$

By (\ref{x^ny^n}) and $\lim\limits_{n\rightarrow\infty}x^{(n)}=0$ we have $\lim\limits_{n\rightarrow\infty}y^{(n)}=0.$

Let's prove the assertion $(ii)$. We consider the trajectory of any initial point $\left(x^{(0)}, y^{(0)}\right)$ in the sets $\Omega_1$ and $\Omega_2$ . The function $\varphi(\textbf{z})$ is increasing on the set $\Omega_1$ and is bounded from above. The function $\varphi(\textbf{z})$ is decreasing in the set $\Omega_2$ and is bounded from below. So, the existence of the
\begin{equation}\label{varfi}\lim\limits_{n\rightarrow\infty}\varphi(\textbf{z}^{(n)})\neq0.\end{equation}
$$\lim\limits_{n\rightarrow\infty}\varphi(\textbf{z}^{(n)})=\lim\limits_{n\rightarrow\infty}\left(\varphi(\textbf{z}^{(n-1)})+\frac{d_0\mu x^{(n-1)}}{1+x^{(n-1)}}\left(x^*-x^{(n-1)}\right)\right)= \lim\limits_{n\rightarrow\infty}\varphi(\textbf{z}^{(n-1)})+$$
$$+d_0\mu\lim\limits_{n\rightarrow\infty}\frac{x^{(n-1)}}{1+x^{(n-1)}}\left(x^*-x^{(n-1)}\right),\ \Rightarrow \ \lim\limits_{n\rightarrow\infty}\frac{x^{(n-1)}}{1+x^{(n-1)}}\left(x^*-x^{(n-1)}\right)=0,\ \Rightarrow$$ $$\Rightarrow \ \lim\limits_{n\rightarrow\infty}x^{(n-1)}=0,\ or \ \lim\limits_{n\rightarrow\infty}x^{(n-1)}=x^*.$$
Assume $\lim\limits_{n\rightarrow\infty}x^{(n)}=0.$ Then, from (\ref{x^ny^n}) we have $\lim\limits_{n\rightarrow\infty}y^{(n)}=0.$ Hence, $\lim\limits_{n\rightarrow\infty}\varphi(\textbf{z}^{(n)})=0.$
This is a contradiction to (\ref{varfi}). So, $\lim\limits_{n\rightarrow\infty}x^{(n)}=x^*.$

By (\ref{x^ny^n}) and $\lim\limits_{n\rightarrow\infty}x^{(n)}=x^*$ we have $\lim\limits_{n\rightarrow\infty}y^{(n)}=y^*.$

Now, we consider the trajectory of an initial point $\left(x^{(0)}, y^{(0)}\right)$ in the sets $\Omega_3$ and $\Omega_4$.

Suppose $(x^{(n)}, y^{(n)})\in\Omega_3$ for all $n\in N$  and for any initial point $(x^{(0)}, y^{(0)})$ taken from the set $\Omega_3$. Then the sequence $x^{(n)}$ is decreasing and bounded from below, the sequence $y^{(n)}$ is increasing and bounded from above.
Indeed, from $x^*< x^{(n)}\leq\frac{\alpha\beta}{\mu d_{0}},\ 0\leq y^{(n)}\leq y^*$ we get:
$$x^{(n+1)}-x^{(n)}=\beta y^{(n)}-\frac{\alpha x^{(n)}}{1+x^{(n)}}-d_0x^{(n)}\leq \beta y^*-\frac{\alpha x^*}{1+x^*}-d_0x^*=0,$$
$$y^{(n+1)}-y^{(n)}=\frac{\alpha x^{(n)}}{1+x^{(n)}}-\mu y^{(n)}\geq\frac{\alpha x^*}{1+x^*}-\mu y^*=0.$$
Hence, the sequences $x^{(n)}, y^{(n)}$ have their limits. From (\ref{x^ny^n}) it follows that $\lim\limits_{n\rightarrow\infty}x^{(n)}=x^*, \lim\limits_{n\rightarrow\infty}y^{(n)}=y^*.$

 Assume $(x^{(n)}, y^{(n)})\in\Omega_4$ for all $n\in N$  and for any initial point $(x^{(0)}, y^{(0)})$ taken from the set $\Omega_4$. Then the sequence $x^{(n)}$ is increasing and bounded from above, the sequence $y^{(n)}$ is decreasing and bounded from below.
Indeed, from $0\leq x^{(n)}<x^*, y^*<y^{(n)}\leq\frac{\alpha}{\mu}$ we get:
$$x^{(n+1)}-x^{(n)}=\beta y^{(n)}-\frac{\alpha x^{(n)}}{1+x^{(n)}}-d_0x^{(n)}\geq\beta y^*-\frac{\alpha x^*}{1+x^*}-d_0x^*=0,$$
$$y^{(n+1)}-y^{(n)}=\frac{\alpha x^{(n)}}{1+x^{(n)}}-\mu y^{(n)}\leq\frac{\alpha x^*}{1+x^*}-\mu y^*=0.$$
So, the sequences $x^{(n)}, y^{(n)}$ have their limits. Since (\ref{x^ny^n}), we have $\lim\limits_{n\rightarrow\infty}x^{(n)}=x^*, \lim\limits_{n\rightarrow\infty}y^{(n)}=y^*.$

\end{proof}

\section{Biological interpretations}

In biology an population biologist is interested in the long-term behavior of the population of a certain species or collection of species. Namely, what happens to an initial population of members. Does the population become arbitrarily large as time goes on? Does the population tend to zero, leading to extinction of the species? \cite{BR1} In this section we briefly give some answers to these questions related to our model of the mosquito population.

Each point (vector) $\textbf{z}=(x;y)\in \mathbb{R}^{2}_{+}$ can be considered as a state (a measure) of the mosquito population.

Let us give some interpretations of our main results (interpretation of  Theorem \ref{omega}):
\begin{itemize}
\item[(a)] Let $\beta=\mu\left(1+\frac{d_{0}}{\alpha}\right).$ Under this condition on $\beta$ (i.e. the birth rate of adults), the mosquito population dies;
\item[(b)] If the inequality $\beta>\mu\left(1+\frac{d_{0}}{\alpha}\right)$ holds for $\beta$, then of the mosquito population tends to the equilibrium state $(x^*, y^*)$ with the passage of time.
 \end{itemize}

\end{document}